\newcommand{\numberseries}{\bfseries}   %Fontseries used for numbering
\newlength{\thmtopspace}                %Space above theorem
\newlength{\thmbotspace}                %Space below theorem
\newlength{\thmheadspace}               %Space after theorem label
\newlength{\thmindent}                  %For indenting
\newtheoremstyle{fixed bf head,slanted body}
                {\thmtopspace}{\thmbotspace}{\slshape}
                {\thmindent}{\bfseries}{.}{\thmheadspace}
                {{\numberseries \thmnumber{#2\;}}\thmname{#1}\thmnote{ (#3)}}
\newtheoremstyle{variable bf head,slanted body}
                {\thmtopspace}{\thmbotspace}{\slshape}
                {\thmindent}{\bfseries}{.}{\thmheadspace}
                {{\numberseries \thmnumber{#2\;}}\thmname{#1}\thmnote{ #3}}
\newtheoremstyle{fixed bf head,upright body}
                {\thmtopspace}{\thmbotspace}{\upshape}
                {\thmindent}{\bfseries}{.}{\thmheadspace}
                {{\numberseries \thmnumber{#2\;}}\thmname{#1}\thmnote{ (#3)}}
\newtheoremstyle{numbered paragraph}
                {\thmtopspace}{\thmbotspace}{\upshape}
                {\thmindent}{\upshape}{}{\thmheadspace}
                {{\numberseries \thmnumber{#2.}}}
\theoremstyle{fixed bf head,slanted body}
\newtheorem{res}{}[section]
\newtheorem{thm}[res]{Theorem}          \newtheorem*{thm*}{Theorem}
\newtheorem{prp}[res]{Proposition}      \newtheorem*{prp*}{Proposition}
        \newtheorem*{cor*}{Corollary}
\newtheorem{lem}[res]{Lemma}            \newtheorem*{lem*}{Lemma}
\theoremstyle{variable bf head,slanted body}
     \newtheorem*{introthm*}{Theorem}
   \newtheorem*{introcor*}{Corollary}
\theoremstyle{fixed bf head,upright body}
            \newtheorem*{stp*}{Setup}
       \newtheorem*{dfn*}{Definition}
     \newtheorem*{con*}{Construction}
      \newtheorem*{obs*}{Observation}
\newtheorem{rmk}[res]{Remark}           \newtheorem*{rmk*}{Remark}
\newtheorem{exa}[res]{Example}          \newtheorem*{exa*}{Example}
         \newtheorem*{qst*}{Question}
\theoremstyle{numbered paragraph}
\newtheorem{ipg}[res]{}
\newlength{\thmlistleft}        %leftmargin
\newlength{\thmlistright}       %rightmargin
\newlength{\thmlistpartopsep}   %partopsep
\newlength{\thmlisttopsep}      %topsep
\newlength{\thmlistparsep}      %parsep
\newlength{\thmlistitemsep}     %itemsep
\newcounter{eqc} 
\newenvironment{eqc}{\begin{list}{\upshape (\textit{\roman{eqc}})}%
    {\usecounter{eqc}%
      \setlength{\leftmargin}{\thmlistleft}%
      \setlength{\labelwidth}{\thmlistleft}%
      \setlength{\rightmargin}{\thmlistright}%
      \setlength{\partopsep}{\thmlistpartopsep}%
      \setlength{\topsep}{\thmlisttopsep}%
      \setlength{\parsep}{\thmlistparsep}%
      \setlength{\itemsep}{\thmlistitemsep}}}%
  {\end{list}}%
\newcounter{prt}
\newenvironment{prt}{\begin{list}{\upshape (\alph{prt})}%
    {\usecounter{prt}%
      \setlength{\leftmargin}{\thmlistleft}%
      \setlength{\labelwidth}{\thmlistleft}%
      \setlength{\rightmargin}{\thmlistright}%
      \setlength{\partopsep}{\thmlistpartopsep}%
      \setlength{\topsep}{\thmlisttopsep}%
      \setlength{\parsep}{\thmlistparsep}%
      \setlength{\itemsep}{\thmlistitemsep}}}%
  {\end{list}}%
\newcounter{rqm}
  {\end{list}}%
\newcommand{\pgref}[1]{\ref{#1}}
\newcommand{\thmref}[2][Theorem~]{#1\pgref{thm:#2}}
\newcommand{\prpref}[2][Proposition~]{#1\pgref{prp:#2}}
\newcommand{\lemref}[2][Lemma~]{#1\pgref{lem:#2}}
\newcommand{\exaref}[2][Example~]{#1\pgref{exa:#2}}
\newcommand{\rmkref}[2][Remark~]{#1\pgref{rmk:#2}}
\newcommand{\secref}[2][Section~]{#1\ref{sec:#2}}
\renewcommand{\eqref}[1]{(\pgref{eq:#1})}
\def\@nobreak@#1{\mathchoice%
  {\nobreakdef@\displaystyle\f@size{#1}}%
  {\nobreakdef@\nobreakstyle\tf@size{\firstchoice@false #1}}%
  {\nobreakdef@\nobreakstyle\sf@size{\firstchoice@false #1}}%
  {\nobreakdef@\nobreakstyle\ssf@size{\firstchoice@false #1}}%
  \check@mathfonts}%
\def\nobreakdef@#1#2#3{\hbox{{%
                    \everymath{#1}%
                    \let\f@size#2\selectfont%
                    #3}}}%
\DeclareFontFamily{T1}{cmex}{}
\DeclareFontShape{T1}{cmex}{m}{n}{<-> s * [0.89] cmex10}{}
\DeclareSymbolFont{cmlargesymbols}{T1}{cmex}{m}{n}
\DeclareMathSymbol{\mycoprod}{\mathop}{cmlargesymbols}{"60} 
\DeclareMathSymbol{\myprod}{\mathop}{cmlargesymbols}{"51} \let\prod\myprod
\DeclareSymbolFont{usualmathcal}{OMS}{cmsy}{m}{n}
\DeclareSymbolFontAlphabet{\mathcal}{usualmathcal}
\DeclareSymbolFont{letters}{OML}{txmi}{m}{it}
\DeclareMathSymbol{\alpha}{\mathord}{letters}{"0B}
\DeclareMathSymbol{\beta}{\mathord}{letters}{"0C}
\DeclareMathSymbol{\gamma}{\mathord}{letters}{"0D}
\DeclareMathSymbol{\delta}{\mathord}{letters}{"0E}
\DeclareMathSymbol{\epsilon}{\mathord}{letters}{"0F}
\DeclareMathSymbol{\zeta}{\mathord}{letters}{"10}
\DeclareMathSymbol{\eta}{\mathord}{letters}{"11}
\DeclareMathSymbol{\theta}{\mathord}{letters}{"12}
\DeclareMathSymbol{\iota}{\mathord}{letters}{"13}
\DeclareMathSymbol{\kappa}{\mathord}{letters}{"14}
\DeclareMathSymbol{\lambda}{\mathord}{letters}{"15}
\DeclareMathSymbol{\mu}{\mathord}{letters}{"16}
\DeclareMathSymbol{\nu}{\mathord}{letters}{"17}
\DeclareMathSymbol{\xi}{\mathord}{letters}{"18}
\DeclareMathSymbol{\pi}{\mathord}{letters}{"19}
\DeclareMathSymbol{\rho}{\mathord}{letters}{"1A}
\DeclareMathSymbol{\sigma}{\mathord}{letters}{"1B}
\DeclareMathSymbol{\tau}{\mathord}{letters}{"1C}
\DeclareMathSymbol{\upsilon}{\mathord}{letters}{"1D}
\DeclareMathSymbol{\phi}{\mathord}{letters}{"1E}
\DeclareMathSymbol{\chi}{\mathord}{letters}{"1F}
\DeclareMathSymbol{\psi}{\mathord}{letters}{"20}
\DeclareMathSymbol{\omega}{\mathord}{letters}{"21}
\DeclareMathSymbol{\varepsilon}{\mathord}{letters}{"22}
\DeclareMathSymbol{\vartheta}{\mathord}{letters}{"23}
\DeclareMathSymbol{\varpi}{\mathord}{letters}{"24}
\DeclareMathSymbol{\varrho}{\mathord}{letters}{"25}
\DeclareMathSymbol{\varsigma}{\mathord}{letters}{"26}
\DeclareMathSymbol{\varphi}{\mathord}{letters}{"27}
\DeclareMathSymbol{\Gamma}{\mathord}{letters}{"00}
\DeclareMathSymbol{\Delta}{\mathord}{letters}{"01}
\DeclareMathSymbol{\Theta}{\mathord}{letters}{"02}
\DeclareMathSymbol{\Lambda}{\mathord}{letters}{"03}
\DeclareMathSymbol{\Xi}{\mathord}{letters}{"04}
\DeclareMathSymbol{\Pi}{\mathord}{letters}{"05}
\DeclareMathSymbol{\Sigma}{\mathord}{letters}{"06}
\DeclareMathSymbol{\Upsilon}{\mathord}{letters}{"07}
\DeclareMathSymbol{\Phi}{\mathord}{letters}{"08}
\DeclareMathSymbol{\Psi}{\mathord}{letters}{"09}
\DeclareMathSymbol{\Omega}{\mathord}{letters}{"0A}
\DeclareMathSymbol{\upGamma}{\mathalpha}{operators}{"00}
\DeclareMathSymbol{\upDelta}{\mathalpha}{operators}{"01}
\DeclareMathSymbol{\upTheta}{\mathalpha}{operators}{"02}
\DeclareMathSymbol{\upLambda}{\mathalpha}{operators}{"03}
\DeclareMathSymbol{\upXi}{\mathalpha}{operators}{"04}
\DeclareMathSymbol{\upPi}{\mathalpha}{operators}{"05}
\DeclareMathSymbol{\upSigma}{\mathalpha}{operators}{"06}
\DeclareMathSymbol{\upUpsilon}{\mathalpha}{operators}{"07}
\DeclareMathSymbol{\upPhi}{\mathalpha}{operators}{"08}
\DeclareMathSymbol{\upPsi}{\mathalpha}{operators}{"09}
\DeclareMathSymbol{\upOmega}{\mathalpha}{operators}{"0A}
\renewcommand{\con}[1]{\textnormal{({\small #1})}}
\newcommand{\ideal}[1]{\text{\small$($}#1\text{\small$)$}}
\newcommand{\idealt}[1]{\text{\tiny$($}#1\text{\tiny$)$}}
\newcommand{\Ltp}[3][R]{#2\otimes_{#1}^\mathbf{L}#3}
\newcommand{\Gdim}[2]{\textnormal{G-dim}_{#1}(#2)}
\newcommand{\pd}[2]{\mathrm{pd}_{#1}(#2)}
\newcommand{\Tor}[4]{\operatorname{Tor}^{\mspace{2mu}#1}_{#2}(#3,#4)}
\newcommand{\Stor}[4][R]{\smash{\operatorname{\widetilde{Tor}}}_{#2}^{\mspace{3mu}{#1}^{\phantom{|\mspace{-6mu}}}}(#3,#4)}
\begin{document}

\vspace*{-0.1ex}

\title{On modules with self Tor vanishing}
\author{Olgur Celikbas \ }
\address{Department of Mathematics, West Virginia University, Morgantown, WV 26506 U.S.A} 
\email{olgur.celikbas@math.wvu.edu}

\author{ \ Henrik Holm}
\address{Department of Mathematical Sciences, Universitetsparken 5, University of Co\-penhagen, 2100 Copenhagen {\O}, Denmark} 
\email{holm@math.ku.dk}
\urladdr{http://www.math.ku.dk/\~{}holm/}

%\thanks{This work was initiated in June 2015 when the first author visited the Department of Mathematical Sciences at the University of Copenhagen. We are grateful to the department for its hospitality.}

\keywords{G-dimension; projective dimension; Tor-persistent ring; vanishing of Tor.}

\subjclass[2010]{13D05, 13D07.}

%13D05   Homological dimension 
%13D07   Homological functors on modules (Tor, Ext, etc.) 

\begin{abstract} 
The long-standing Auslander and Reiten Conjecture states that a finitely generated module over a finite-dimensional algebra is projective if certain Ext-groups vanish. Several authors, including Avramov, Buchweitz, Iyengar, Jorgensen, Nasseh, Sather-Wagstaff, and \c{S}ega, have studied a possible counterpart of the conjecture, or question, for commutative rings in terms of vanishing of Tor. This has led to the notion of Tor-persistent rings. Our main result shows that the class of Tor-persistent local rings is closed under a number of standard procedures in ring theory.
\end{abstract}
\maketitle

%$\alpha \beta \gamma \delta \epsilon \varepsilon \zeta \eta \theta \vartheta \iota \kappa \lambda \mu \nu o \pi \varpi \rho \varrho \sigma \varsigma \tau \upsilon \phi \varphi \chi \psi \omega$

\section{Introduction}
\label{sec:Introduction}

Inspired by work of \c{S}ega \cite[para.~preceding Thm.~2.6]{MR2769231}, Avramov, Iyengar, Nasseh, and Sather-Wagstaff raise in \cite{AINAW2}\footnote{Note that this work is announced under the different title
\emph{Vanishing of endohomology over local rings} in \cite{AINAW}.}, 
the question of whether every commutative noetherian ring is Tor-persistent. A commutative ring $A$ is said to be \emph{Tor-persistent} if every finitely generated $A$-module $M$ with $\Tor{A}{i}{M}{M}=0$ for all $i\gg 0$, that is, $\Tor{A}{}{M}{M}$ is bounded, has finite projective dimension. We refer to \cite{AINAW2} and the precursor \cite{AINAW} (by the same authors) for a history/background of this question. The mentioned works also contain information about several interesting classes of rings which are known to be Tor-persistent. This includes Gorenstein rings with an exact zero divisor whose radical to the fourth power is zero \cite[Thm.~2]{MR2769231}, complete intersection rings \cite[Cor.~(1.2)]{DAJ97} (see also \cite[Thm.~IV]{LLAROB00} and \cite[Thm.~1.9]{MR1612887}) and Golod rings \cite[Thm.~3.1]{DAJ99}.

   In \cite[Prop.~1.6]{AINAW2} it is shown that a commutative noethe\-rian ring $A$ is Tor-persistent if and only if the localization $A_\mathfrak{m}$ is so for every maximal ideal $\mathfrak{m} \subset A$; hence it suffices to study the question mentioned above for commutative noetherian \textsl{local} rings. Throughout this paper, $(R,\mathfrak{m},k)$ denotes such a ring. Our main result is the following:

\begin{thm} 
  \label{thm:mainresult}
  The following conditions are equivalent:
  \begin{eqc}
  \item $R$ is Tor-persistent.
  \item $\widehat{R}$ is Tor-persistent.
  \item \mbox{$R[\mspace{-2.5mu}[X_1,\ldots,X_n]\mspace{-2.5mu}]$} is Tor-persistent.
  \item $R[X_1,\ldots,X_n]_{(\mathfrak{m},X_1,\ldots,X_n)}$ is Tor-persistent.
  \end{eqc}
\end{thm}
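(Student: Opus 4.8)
The plan is to reduce the four conditions to two base-change statements, each valid for every commutative noetherian local ring $(A,\mathfrak n)$: \textup{(a)} $A$ is Tor-persistent if and only if $\widehat{A}$ is; and \textup{(b)} $A$ is Tor-persistent if and only if $A[[X]]$ is. Granting these, \eqclbl{i}$\Leftrightarrow$\eqclbl{ii} is \textup{(a)}; iterating \textup{(b)} over the variables shows $\widehat{R}$ is Tor-persistent exactly when $\widehat{R}[[X_1,\ldots,X_n]]$ is, and as that ring is the completion of both $R[[X_1,\ldots,X_n]]$ and $R[X_1,\ldots,X_n]_{(\mathfrak m,X_1,\ldots,X_n)}$, two further applications of \textup{(a)} give \eqclbl{ii}$\Leftrightarrow$\eqclbl{iii}$\Leftrightarrow$\eqclbl{iv}. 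In each of \textup{(a)} and \textup{(b)} the implication ``the larger ring is Tor-persistent $\Rightarrow$ so is $A$'' is routine: $\widehat{A}$ and $A[[X]]$ are faithfully flat $A$-algebras, so if $M$ is a finitely generated $A$-module with $\Tor{A}{}{M}{M}$ bounded then its base change has bounded self-Tor by flat base change for $\operatorname{Tor}$, hence finite projective dimension, hence $\pd{A}{M}<\infty$ by faithfully flat descent.

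For the substantial direction of \textup{(b)}, put $S:=A[[X]]$ and let $N$ be finitely generated over $S$ with $\Tor{S}{}{N}{N}$ bounded; the target is $\pd{S}{N}<\infty$. I would first replace $N$ by a first syzygy $\Omega^{1}_{S}N\subseteq S^{b}$: this alters neither finiteness of projective dimension nor, since $\Tor{S}{i}{\Omega^{1}_{S}N}{\Omega^{1}_{S}N}\cong\Tor{S}{i+2}{N}{N}$ for $i\ge 1$, boundedness of self-Tor. The gain is that $X$, a nonzerodivisor of $S$ lying in $\mathfrak m_{S}$, now acts injectively on $S^{b}$ and hence on $N$. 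With $X$ a nonzerodivisor on $N$ one has $\pd{S}{N}=\pd{A}{N/XN}$, and the complex $C_{\bullet}:=F_{\bullet}\otimes_{S}N$ (for $F_{\bullet}\to N$ a free resolution over $S$) carries an injective termwise action of $X$, with $C_{\bullet}/XC_{\bullet}=F_{\bullet}\otimes_{S}(N/XN)$ computing $\Tor{A}{}{N/XN}{N/XN}$ since $X$ is $N$-regular; the short exact sequence of complexes $0\to C_{\bullet}\xrightarrow{X}C_{\bullet}\to C_{\bullet}/XC_{\bullet}\to 0$ then yields exact sequences tying $\Tor{A}{j}{N/XN}{N/XN}$ to $\Tor{S}{j}{N}{N}$ and $(0:_{\Tor{S}{j-1}{N}{N}}X)$, whence $\Tor{A}{}{N/XN}{N/XN}$ is bounded. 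Tor-persistence of $A$ now forces $\pd{A}{N/XN}<\infty$, i.e.\ $\pd{S}{N}<\infty$; iterating handles $A[[X_1,\ldots,X_n]]$.

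For the substantial direction of \textup{(a)}, choose generators $\mathfrak n=(a_{1},\ldots,a_{n})$ and write $\widehat{A}=Q/(\underline{y})$, where $Q:=A[[X_{1},\ldots,X_{n}]]$ and $\underline{y}:=(X_{1}-a_{1},\ldots,X_{n}-a_{n})$. Since the $a_{i}$ lie in $\mathfrak n$, the sequence $\underline{y}$ reduces to the regular system of parameters $X_{1},\ldots,X_{n}$ of the closed fiber $Q/\mathfrak n Q=k[[X_{1},\ldots,X_{n}]]$, so $\underline{y}$ is $Q$-regular. Let $N$ be finitely generated over $\widehat{A}$ with $\Tor{\widehat{A}}{}{N}{N}$ bounded. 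Because $\underline{y}$ annihilates $N$, the Koszul resolution of $\widehat{A}$ over $Q$ gives $\Tor{Q}{q}{\widehat{A}}{N}\cong N^{\binom{n}{q}}$, and feeding this into the change-of-rings spectral sequence $\Tor{\widehat{A}}{p}{N}{\Tor{Q}{q}{\widehat{A}}{N}}\Rightarrow\Tor{Q}{p+q}{N}{N}$ shows $\Tor{Q}{}{N}{N}$ is bounded. As $\widehat{A}$ is a cyclic $Q$-module, $N$ is finitely generated over $Q$, and $Q=A[[X_{1},\ldots,X_{n}]]$ is Tor-persistent by the iterated form of \textup{(b)}; therefore $\pd{Q}{N}<\infty$. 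Together with $\widehat{A}=Q/(\underline{y})$ this exhibits $N$ as a module of finite complete-intersection dimension over $\widehat{A}$, and the known fact that a module of finite complete-intersection dimension with bounded self-Tor has finite projective dimension — extending \cite[Cor.~(1.2)]{DAJ97} — yields $\pd{\widehat{A}}{N}<\infty$.

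The crux is the completion case \textup{(a)}. Along a power-series or polynomial extension there is a regular element to cut down by, and the syzygy trick arranges for $X$ to act as a nonzerodivisor, so everything reduces to the base ring by standard change-of-rings manipulation; but $\widehat{A}$ carries no such regular element and a general finitely generated $\widehat{A}$-module cannot be descended to $A$, which is exactly why one is forced to route the module through the Tor-persistent power series ring $Q=A[[X_{1},\ldots,X_{n}]]$ and then call on complete-intersection dimension. The input that finite complete-intersection dimension together with bounded self-Tor forces finite projective dimension — essentially the $\operatorname{Tor}$-analogue of the Auslander--Reiten property over complete intersections — is the deepest ingredient; all that remains is bookkeeping with the syzygy reduction, change-of-rings spectral sequences, and faithfully flat (co)descent.
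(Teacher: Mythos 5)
Your overall architecture is sound, and large parts of it match the paper's proof in content if not in packaging. The easy directions (descent from $\widehat A$ or $A[[X]]$ to $A$ by faithfully flat base change) correspond to the paper's \lemref{fd}; your ascent to $A[[X]]$ by passing to a syzygy on which $X$ is regular and then cutting down is a correct, more hands-on substitute for applying that same lemma to the surjection $A[[X]]\to A$, whose target has finite flat dimension over the source via the Koszul complex; and your Koszul/spectral-sequence computation showing that boundedness of $\Tor{\widehat A}{}{N}{N}$ forces boundedness of $\Tor{Q}{}{N}{N}$, where $Q=A[[X_1,\ldots,X_n]]$ and $\widehat A=Q/(X_1-a_1,\ldots,X_n-a_n)$, is exactly the $n$-variable version of the long exact sequence the paper uses in \prpref{regular}. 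Up to and including ``$\pd{Q}{N}<\infty$'' I have no complaints.

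The step I cannot accept as written is the last one: descending from $\pd{Q}{N}<\infty$ to $\pd{\widehat A}{N}<\infty$ by declaring $N$ to have finite CI-dimension over $\widehat A$ and invoking ``the known fact'' that finite CI-dimension plus bounded self-Tor forces finite projective dimension. Two problems. First, this fact is doing all the real work and is cited without a precise statement or reference; moreover, your argument nowhere uses the specific shape of the relations $X_i-a_i$ once regularity is established, so if the invoked fact holds as you state it, your proof shows that Tor-persistence passes to the quotient by an \emph{arbitrary} regular sequence --- precisely the question the paper records as open in \rmkref{question}. That is a strong signal that you must pin down the exact hypotheses of the theorem you are using: the complete-intersection case \cite[Cor.~(1.2)]{DAJ97} does not suffice, and you would need to verify that the finite-CI-dimension version applies with only one module in play over a general local ring. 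Second, the heavy machinery is unnecessary here: each $y_i=X_i-a_i$ has linear part $X_i$, so $y_i\notin\mathfrak q^2+(y_1,\ldots,y_{i-1})$ where $\mathfrak q$ is the maximal ideal of $Q$ (differentiate with respect to $X_i$ and evaluate at the origin, as in the paper's \lemref{sequence}), and then Nagata's theorem \cite[Prop.~3.3.5(1)]{ifr} gives $\pd{\widehat A}{N}=\pd{Q}{N}-n<\infty$ directly. Replacing the CI-dimension appeal by this observation makes your proof complete and self-contained, and is essentially the route the paper takes.
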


While some papers in the literature approach the question raised in \cite{AINAW2} by finding specific conditions that imply Tor-persistence, we show that Tor-persistence is a property preserved by standard procedures in local algebra. Our work is motivated by \cite{LWCHHl12} where a result similar to \thmref{mainresult} is proved for the so-called Auslander's condition. However, our arguments are somewhat different since the techniques used in \emph{loc.~cit.}~do not work in our setting; see \rmkref{question} and \cite[Cor.~(2.2)]{LWCHHl12}.

It should be noticed that there is some overlap between this paper and \cite{AINAW2}. For example, the equivalence \mbox{($i$) $\Leftrightarrow$ ($ii$)} in \thmref{mainresult}  is contained in \cite[Prop.~1.5]{AINAW2}, and our \prpref[Prop\-osi\-tion~]{regular} is akin to \cite[Prop.~3.8]{AINAW2}. However, the two papers have been written completely independently, indeed, \cite{AINAW2} were only made available to us after we completed this work. Subsequently, we rewrote our introduction and adopted the terminology ``Tor-persistent'' coined in \cite{AINAW2}.

This short paper is organized as follows. In \secref{Main} we prove \thmref{mainresult} and show how to construct new examples of Tor-persistent rings (Example \ref{exa:HSV}). We also give a way to obtain certain kinds of regular sequences in power series rings (\lemref{sequence}),~which~might be of independent interest. In \secref{G} we consider another property for rings, called \con{TG}; it is a slightly weaker property than Tor-persistence and it is related to the Gorenstein dimension. For this property we prove a result similar to \thmref{mainresult} (see \thmref{TG}), and show that some results from \secref{Main} can be strengthened in this new setting.

\section{Main results}
\label{sec:Main}

\begin{lem}
  \label{lem:fd}
   Let $(R,\mathfrak{m},k) \to (S,\mathfrak{n},\ell)$ be a local homomorphism of commutative noetherian local rings. If $S$ is Tor-persistent and has finite flat dimension over $R$, then $R$ is Tor-persistent.
\end{lem}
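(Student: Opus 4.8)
The plan is to move the problem along the local homomorphism $\varphi\colon(R,\mathfrak m,k)\to(S,\mathfrak n,\ell)$ by derived base change $\Ltp[R]{S}{-}$, exploit that $S$ is Tor-persistent, and then descend back to $R$. So I would start with a finitely generated $R$-module $M$ with $\Tor{R}{i}{M}{M}=0$ for $i\gg0$ and aim to show $\pd{R}{M}<\infty$; we may assume $M\neq0$. Put $N=\Ltp[R]{S}{M}$. Since $M$ admits a resolution by finitely generated free $R$-modules and $\operatorname{fd}_RS<\infty$, the complex $N$ is homologically finite over $S$: its homology $\Tor{R}{i}{S}{M}$ is finitely generated over $S$ in each degree, vanishes for $i<0$ and for $i\gg0$, and in fact $N$ has homology concentrated in degrees $[0,s]$ for some $s\geq0$.

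The first step is to see that $\Tor{S}{}{N}{N}$ is bounded. By associativity and commutativity of the derived tensor product, $\Ltp[S]{N}{N}\simeq\Ltp[R]{S}{(\Ltp[R]{M}{M})}$. The $R$-complex $\Ltp[R]{M}{M}$ has bounded homology precisely by the hypothesis on $M$, and applying $\Ltp[R]{S}{-}$ preserves homological boundedness since $\operatorname{fd}_RS<\infty$; hence $\Ltp[S]{N}{N}$ has bounded homology.

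The second — and, I expect, only genuinely delicate — step is to replace the complex $N$ by an honest module. Take a resolution $P$ of $N$ by finitely generated free $S$-modules with $P_i=0$ for $i<0$, fix $n>s$, and set $\Omega=\operatorname{coker}(P_{n+1}\to P_n)$, a finitely generated $S$-module. Brutal truncation yields a degreewise split short exact sequence of complexes $0\to\sigma_{<n}P\to P\to\sigma_{\geq n}P\to0$ in which $Q:=\sigma_{<n}P$ is a perfect complex (a bounded complex of finitely generated free modules), while $\sigma_{\geq n}P$ is, up to the shift $[n]$, a free resolution of $\Omega$ (its homology is $\Omega$ in degree $n$ and vanishes above $n$, as $n>s$). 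This produces an exact triangle $Q\to N\to\Omega[n]\to Q[1]$ in $\mathsf{D}(S)$. Tensoring it with $N$ over $S$ shows $\Ltp[S]{\Omega}{N}$ has bounded homology (the $\Ltp[S]{N}{N}$ term is bounded by Step 1, the $\Ltp[S]{Q}{N}$ term because $Q$ is perfect); then tensoring the triangle with $\Omega$ shows $\Ltp[S]{\Omega}{\Omega}$ has bounded homology, i.e. $\Tor{S}{}{\Omega}{\Omega}$ is bounded. The same triangle, together with $\pd{S}{Q}<\infty$ and closure of perfect complexes under the two-out-of-three property in triangles, gives $\pd{S}{N}<\infty\iff\pd{S}{\Omega}<\infty$.

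Now Tor-persistence of $S$ applies to the finitely generated module $\Omega$ and yields $\pd{S}{\Omega}<\infty$, hence $\pd{S}{N}<\infty$. It remains to descend. As $\varphi$ is local, $\mathfrak mS\subseteq\mathfrak n$, so $\ell=S/\mathfrak n$ is an $R$-module via the surjection $R\to k$, and base change gives $\Ltp[S]{N}{\ell}\simeq\Ltp[R]{M}{\ell}\simeq(\Ltp[R]{M}{k})\otimes_k\ell$; therefore $\Tor{S}{i}{N}{\ell}\cong\Tor{R}{i}{M}{k}\otimes_k\ell$ for all $i$. Since $\pd{S}{N}<\infty$ forces the left side to vanish for $i\gg0$ and $k\to\ell$ is faithfully flat, $\Tor{R}{i}{M}{k}=0$ for $i\gg0$, i.e. $\pd{R}{M}<\infty$, so $R$ is Tor-persistent. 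The only point requiring real care is Step 2; it may be cleanest to isolate it once, as the statement that a noetherian local ring is Tor-persistent iff every homologically finite complex with bounded self-Tor has finite projective dimension — with that recorded, Steps 1 and the descent argument give the lemma at once.
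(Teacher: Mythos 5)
Your argument is correct and follows essentially the same route as the paper: both proofs hinge on the associativity isomorphism $\Ltp[S]{(\Ltp{M}{S})}{(\Ltp{M}{S})}\simeq \Ltp{(\Ltp{M}{M})}{S}$ together with finiteness of $\operatorname{fd}_R S$ to obtain bounded self-Tor over $S$, then invoke Tor-persistence of $S$ and descend to conclude $\pd{R}{M}<\infty$. The only differences are organizational: the paper replaces $M$ by a sufficiently high $R$-syzygy \emph{before} base change, so that $\Ltp{M}{S}\cong M\otimes_R S$ is already a finitely generated $S$-module and your (correct but more laborious) Step~2 truncation argument over $S$ is not needed, and it quotes \cite[(1.5.3)]{MR1455856} for the final descent where you give the direct residue-field computation.
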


\begin{proof}
  Assume $S$ is Tor-persistent and let $M$ be a finitely generated $R$-module such that $\Tor{R}{i}{M}{M}=0$ for all $i\gg 0$. We have $\Tor{R}{i}{M}{S}=0$ for each $i>d$, where $d$ is the flat dimension of $S$ over $R$. Replacing $M$ by a sufficiently high syzygy we can (by dimension shifting) assume that $\Tor{R}{i}{M}{M}=0$ and $\Tor{R}{i}{M}{S}=0$ for every $i>0$. In this case there is an isomorphism $\Ltp{M}{S} \cong M \otimes_R S$ in the derived category over $S$. This yields:
\begin{displaymath}  
  \Ltp{(\Ltp{M}{M})}{S} 
  \cong
  \Ltp[S]{(\Ltp{M}{S})}{(\Ltp{M}{S})}
  \cong
  \Ltp[S]{(M \otimes_R S)}{(M \otimes_R S)}\;.
\end{displaymath}    
As the complex $\Ltp{M}{M}$ is homologically bounded (its homology is even concentrated in degree zero) and since $S$ has finite flat dimension over $R$, the left-hand side is homologi\-cally bounded, and hence so is the right-hand side. That is, $\Tor{S}{i}{M \otimes_R S}{M \otimes_R S}=0$ for all $i\gg 0$. As $S$ is Tor-persistent, it follows that $M \otimes_R S \cong \Ltp{M}{S}$ has finite projective dimension over $S$. It follows from \cite[(1.5.3)]{MR1455856} that $\pd{R}{M}$ is finite.
\end{proof}

\begin{prp}
  \label{prp:regular}
  Let $(R,\mathfrak{m},k)$ be a commutative noetherian local ring and let $\underline{x}=x_1,\ldots,x_n$ be an $R$-regular sequence. If $R/\ideal{\underline{x}}$ is Tor-persistent, then $R$ is Tor-persistent. The converse is true if $x_i \notin \mathfrak{m}^2 + \ideal{x_1,\ldots,x_{i-1}}$ holds for every $i=1,\ldots,n$.
\end{prp}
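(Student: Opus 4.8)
The plan is to obtain the first (unconditional) implication directly from \lemref{fd}, and to prove the converse by induction on $n$, reducing to a single regular element and then chaining three change-of-rings facts.

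For the forward direction I would argue as follows: since $\underline{x}$ is an $R$-regular sequence, the Koszul complex on $\underline{x}$ is a finite free resolution of $R/\ideal{\underline{x}}$ over $R$, so the canonical surjection $R\to R/\ideal{\underline{x}}$ is a local homomorphism along which $R/\ideal{\underline{x}}$ has finite flat (in fact projective) dimension. Hence \lemref{fd} applies verbatim: if $R/\ideal{\underline{x}}$ is Tor-persistent, then so is $R$. Only regularity of $\underline{x}$ is used here.

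For the converse I would induct on $n$, the case $n=0$ being vacuous. Writing $\overline{(-)}$ for images in $R/\ideal{x_1}$: the hypothesis for $i=1$ says $x_1\notin\mathfrak{m}^2$; the sequence $\overline{x}_2,\dots,\overline{x}_n$ is $R/\ideal{x_1}$-regular with $(R/\ideal{x_1})/\ideal{\overline{x}_2,\dots,\overline{x}_n}=R/\ideal{\underline{x}}$; and since $\overline{\mathfrak{m}}^{2}+\ideal{\overline{x}_2,\dots,\overline{x}_{i-1}}=(\mathfrak{m}^2+\ideal{x_1,\dots,x_{i-1}})/\ideal{x_1}$, this shorter sequence again satisfies the hypothesis. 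So applying the case $n=1$ to $(R,x_1)$ and then the inductive hypothesis to $R/\ideal{x_1}$ reduces everything to $n=1$. Thus one may assume $x\in\mathfrak{m}\setminus\mathfrak{m}^2$ is a single $R$-regular element; set $S=R/\ideal{x}$, suppose $R$ is Tor-persistent, and let $M$ be a finitely generated $S$-module with $\Tor{S}{i}{M}{M}=0$ for $i\gg 0$. I would then deduce $\pd{S}{M}<\infty$ in three steps. \emph{(a)} As $x$ is a nonzerodivisor with $xM=0$, one has $\Tor{R}{q}{S}{M}=M$ for $q\in\{0,1\}$ and $=0$ otherwise, so the Cartan--Eilenberg change-of-rings spectral sequence $\Tor{S}{p}{\Tor{R}{q}{S}{M}}{M}\Rightarrow\Tor{R}{p+q}{M}{M}$ has just two nonzero rows and collapses to a long exact sequence in which each $\Tor{R}{i}{M}{M}$ is an extension of a submodule of $\Tor{S}{i}{M}{M}$ by a quotient of $\Tor{S}{i-1}{M}{M}$; in particular $\Tor{R}{i}{M}{M}=0$ for $i\gg 0$. \emph{(b)} Since $R$ is Tor-persistent, $\pd{R}{M}<\infty$. \emph{(c)} Since $x\in\mathfrak{m}\setminus\mathfrak{m}^2$ is $R$-regular and $M$ is an $S$-module of finite projective dimension over $R$, a classical change-of-rings theorem yields $\pd{S}{M}<\infty$, which is the desired conclusion.

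I expect step \emph{(c)} to be the main obstacle: it is the only place the hypothesis $x\notin\mathfrak{m}^2$ enters, and it is genuinely needed --- for $x\in\mathfrak{m}^2$ one can take $R$ regular with $S=R/\ideal{x}$ a singular hypersurface, whose residue field has infinite projective dimension. If a self-contained proof of \emph{(c)} is wanted, I would rerun the degenerate spectral sequence with $k$ replacing the second $M$: this identifies the connecting homomorphism with the Eisenbud--Gulliksen operator $\chi$ of $x$ and shows $\chi$ acts bijectively on $\Tor{S}{i}{M}{k}$ for $i\gg 0$; on the other hand, lifting a minimal $S$-free resolution of $M$ to free $R$-modules, the square of the differential equals $x$ times the lift of $\chi$, so it has all entries in $\mathfrak{m}^2$, and dividing by $x\notin\mathfrak{m}^2$ forces the entries of $\chi$ into $\mathfrak{m}$ --- that is, $\chi=0$ on $\Tor{S}{*}{M}{k}$. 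Being simultaneously bijective and zero, $\Tor{S}{i}{M}{k}$ must vanish for $i\gg 0$, so $\pd{S}{M}<\infty$. Steps \emph{(a)} and \emph{(b)} are formal; the remaining points --- that the regular sequences in play lie in $\mathfrak{m}$, and the bookkeeping of the induction --- are routine.
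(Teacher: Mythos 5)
Your proposal is correct and follows essentially the same route as the paper: the forward direction is the special case of \lemref{fd}, the converse reduces by induction to a single element $x\in\mathfrak{m}\smallsetminus\mathfrak{m}^2$, the two-row change-of-rings spectral sequence you invoke collapses to exactly the long exact sequence the paper cites from Rotman, and your step \emph{(c)} is the classical result the paper quotes from Avramov's \emph{Infinite free resolutions}, Prop.~3.3.5(1). The only difference is that you additionally sketch a self-contained proof of that last step via the Eisenbud operator, which the paper simply cites; that sketch is sound.
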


\begin{proof}
    The first statement is a special case of \lemref{fd}. We now prove the (partial) converse. By assumption, $\bar{x}_i$ is a non zero-divisor on $R/\ideal{x_1,\ldots,x_{i-1}}$, which has the maximal ideal  $\bar{\mathfrak{m}} = \mathfrak{m}/\ideal{x_1,\ldots,x_{i-1}}$. Since $x_i \notin \mathfrak{m}^2 + \ideal{x_1,\ldots,x_{i-1}}$ we have $\bar{x}_i \notin \bar{\mathfrak{m}}^2$, so by induction it suffices to consider the case where $n=1$.
    
  Let $R$ be Tor-persistent and let $x \in \mathfrak{m} \smallsetminus \mathfrak{m}^2$ be a non zero-divisor on $R$. To see that $R/\ideal{x}$ is Tor-persistent, let $N$ be a finitely generated $R/\ideal{x}$-module with \smash{$\Tor{R/\idealt{x}}{i}{N}{N}=0$} for all $i\gg 0$. By \cite[11.65]{rot} (see also \cite[Lem.~2.1]{MR1282227}) there is a long exact sequence,
\begin{displaymath}
  \cdots \longrightarrow \Tor{R/\idealt{x}}{i-1}{N}{N} 
  \longrightarrow \Tor{R}{i}{N}{N} 
  \longrightarrow \Tor{R/\idealt{x}}{i}{N}{N} \longrightarrow \cdots\;.
\end{displaymath}
Therefore $\Tor{R}{i}{N}{N}=0$ for all $i\gg 0$. Since $R$ is Tor-persistent, we get that $\pd{R}{N}$ is finite. As $x \notin \mathfrak{m}^2$, it follows that $\pd{R/\idealt{x}}{N}$ is finite; see e.g.~\cite[Prop.~3.3.5(1)]{ifr}.
\end{proof}

\begin{rmk}
  \label{rmk:question}
  It would be interesting to know if the last assertion in \prpref{regular} holds without the assumption $x_i \notin \mathfrak{m}^2 + (x_1,\ldots,x_{i-1})$, i.e.~if Tor-persistence is preserved when passing to the quotient by an ideal generated by \textsl{any} regular sequence; cf.~\prpref{regular-G}.
\end{rmk}

\begin{rmk}
  \label{rmk:pows}
  The sequence $X_1,\ldots,X_n$ is regular on \mbox{$R[\mspace{-2.5mu}[X_1,\ldots,X_n]\mspace{-2.5mu}]$} and $X_i$ does not belong to $(\mathfrak{m},X_1,\ldots,X_n)^2+(X_1,\ldots,X_{i-1})$. It follows from \prpref{regular} that $R$ is Tor-persistent if and only if \mbox{$R[\mspace{-2.5mu}[X_1,\ldots,X_n]\mspace{-2.5mu}]$} is Tor-persistent.
\end{rmk}

\prpref{regular} can be used to construct new examples of Tor-persistent rings from known examples; see \exaref{HSV}. However, to do so it is useful to have a concrete way of constructing regular sequences with the property mentioned in \prpref[]{regular}. In \lemref{sequence} below we give one such construction.

If $A$ is a commutative ring and $a$ is an element in $A$, then it can happen, perhaps surprisingly, that $X-a$ is a zero-divisor on \mbox{$A[\mspace{-2.5mu}[X]\mspace{-2.5mu}]$}; see \cite[p.~146]{MR0387274} for an example. However, as is well-known, if $A$ is noetherian, then the situation is much nicer:

\begin{ipg}
  \label{nzd}
  Let $A$ be a commutative noetherian ring and consider an elemement $f=f(X_1,\ldots,X_n)$ in \mbox{$A[\mspace{-2.5mu}[X_1,\ldots,X_n]\mspace{-2.5mu}]$}. It follows from \cite[Thm.~5]{MR0271100} that if $f$ has \textsl{some} coefficient which is a unit in $A$, then $f$ is a non zero-divisor on \mbox{$A[\mspace{-2.5mu}[X_1,\ldots,X_n]\mspace{-2.5mu}]$}.
\end{ipg}

\begin{lem}
  \label{lem:sequence}
  Let $(R,\mathfrak{m},k)$ be a commutative noetherian local ring. Consider the power series ring \mbox{$S=R[\mspace{-2.5mu}[X_1,\ldots,X_n]\mspace{-2.5mu}]$} and write $\mathfrak{n} = (\mathfrak{m},X_1,\ldots,X_n)$ for its unique maximal ideal. Let $0=m_0 < m_1 < \cdots < m_{t-1} < m_t = n$ be integers and let $f_1,\ldots,f_t \in \mathfrak{n}$ be elements such that, for every $i=1,\ldots,t$, the following conditions hold:
  \begin{prt}
  \item \mbox{$f_i \in R[\mspace{-2.5mu}[X_1,\ldots,X_{m_i}]\mspace{-2.5mu}] \subseteq S$}.
  
  \item The element \smash{$\frac{\partial f_i}{\partial X_{\mspace{-2mu}j}}(0,\ldots,0) \in R$} is a unit for some $m_{i-1}<j$.
  \end{prt}
  Then $f_1,\ldots,f_t$ is a regular sequence on \mbox{$R[\mspace{-2.5mu}[X_1,\ldots,X_n]\mspace{-2.5mu}]$} with
  $f_i \notin \mathfrak{n}^2 + (f_1,\ldots,f_{i-1})$ for all $i$.
\end{lem}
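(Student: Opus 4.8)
The plan is to check, for each fixed $i\in\{1,\ldots,t\}$ and independently of the other indices, the two assertions: that $f_i$ is a non zero-divisor on $S/(f_1,\ldots,f_{i-1})$, and that $f_i\notin\mathfrak{n}^2+(f_1,\ldots,f_{i-1})$. Condition \prtlbl{a} is exactly what makes each index self-contained, so no induction is needed.

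For the non zero-divisor assertion I would first set $A=R[[X_1,\ldots,X_{m_{i-1}}]]$, a noetherian local ring, and observe that by \prtlbl{a} the elements $f_1,\ldots,f_{i-1}$ all lie in $A$; hence $(f_1,\ldots,f_{i-1})S$ is the extension of the ideal $I=(f_1,\ldots,f_{i-1})A$. Since $A$ is noetherian (so $I$ is finitely generated), the standard identification $A[[\underline{Z}]]/IA[[\underline{Z}]]\cong(A/I)[[\underline{Z}]]$ gives $S/(f_1,\ldots,f_{i-1})\cong B[[X_{m_{i-1}+1},\ldots,X_n]]$, where $B=A/I$ is again noetherian. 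By \prtlbl{a} one has $f_i\in R[[X_1,\ldots,X_{m_i}]]=A[[X_{m_{i-1}+1},\ldots,X_{m_i}]]$; expanding $f_i$ as a power series in $X_{m_{i-1}+1},\ldots,X_{m_i}$ with coefficients in $A$, the coefficient $c_j\in A$ of the linear monomial $X_j$ — for the index $j$ supplied by \prtlbl{b}, which satisfies $m_{i-1}<j\leq m_i$ — has constant term $\frac{\partial f_i}{\partial X_j}(0,\ldots,0)$, a unit of $R$, so $c_j$ is a unit of the local ring $A$. Its image in $B$ is then a unit occurring as a coefficient of the image of $f_i$ in $B[[X_{m_{i-1}+1},\ldots,X_n]]$, so by \ref{nzd} that image is a non zero-divisor on $B[[X_{m_{i-1}+1},\ldots,X_n]]=S/(f_1,\ldots,f_{i-1})$. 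Since each $f_\ell$ lies in $\mathfrak{n}$, the ideal $(f_1,\ldots,f_t)$ is proper, so $f_1,\ldots,f_t$ is a regular sequence on $S$.

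For the second assertion I would argue by contradiction. Suppose $f_i=g+\sum_{\ell=1}^{i-1}a_\ell f_\ell$ with $g\in\mathfrak{n}^2$ and $a_\ell\in S$, and let $\epsilon_\ell\in R$ be the constant term of $a_\ell$, so that $a_\ell-\epsilon_\ell\in\mathfrak{n}$ and hence $(a_\ell-\epsilon_\ell)f_\ell\in\mathfrak{n}^2$; thus $f_i\equiv\sum_{\ell=1}^{i-1}\epsilon_\ell f_\ell\pmod{\mathfrak{n}^2}$. Now apply the $R$-linear map $\lambda_j\colon S\to R$ that returns the coefficient of the linear monomial $X_j$. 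A direct check on the generators of $\mathfrak{n}^2$ gives $\lambda_j(\mathfrak{n}^2)\subseteq\mathfrak{m}$, and by \prtlbl{a} one has $\lambda_j(f_\ell)=0$ for $\ell<i$, since $f_\ell$ involves only $X_1,\ldots,X_{m_\ell}$ and $j>m_{i-1}\geq m_\ell$. Applying $\lambda_j$ to the congruence yields $\lambda_j(f_i)\in\mathfrak{m}$, contradicting the fact that $\lambda_j(f_i)=\frac{\partial f_i}{\partial X_j}(0,\ldots,0)$ is a unit. Hence $f_i\notin\mathfrak{n}^2+(f_1,\ldots,f_{i-1})$.

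The main obstacle is the bookkeeping in the first assertion: realizing $S/(f_1,\ldots,f_{i-1})$ as a power series ring over the noetherian ring $B$ in the remaining variables $X_{m_{i-1}+1},\ldots,X_n$, and then pinning down which monomial carries the unit coefficient of the image of $f_i$ — this is precisely the point where hypotheses \prtlbl{a} and \prtlbl{b} interlock. Once that set-up is in place, \ref{nzd} does the work, and the $\mathfrak{n}/\mathfrak{n}^2$ computation behind the second assertion is routine.
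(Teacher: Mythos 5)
Your proposal is correct and follows essentially the same route as the paper: for the regularity claim you rewrite $S/(f_1,\ldots,f_{i-1})$ as a power series ring over a quotient of $R[\mspace{-2.5mu}[X_1,\ldots,X_{m_{i-1}}]\mspace{-2.5mu}]$ and exhibit a unit coefficient (the linear $X_j$-coefficient) so that \ref{nzd} applies, and for the claim $f_i\notin\mathfrak{n}^2+(f_1,\ldots,f_{i-1})$ you apply the ``coefficient of $X_j$'' functional, which is exactly the paper's $\smash{\frac{\partial}{\partial X_j}}(\,\cdot\,)(\underline{0})$ computation. The only cosmetic difference is that you reduce modulo $\mathfrak{n}^2$ before extracting the coefficient instead of invoking the product rule directly; both arguments are sound.
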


\begin{proof}
  First note that condition (b) implies:
  \begin{equation}
    \label{eq:star}
    \textnormal{The power series $f_i(0,\ldots,0,X_{m_{i-1}+1},\ldots,X_n)$ has a coefficient which is a unit in $R$.}
  \end{equation}
  Indeed, if $m_{i-1}<j$, then \smash{$\frac{\partial f_i}{\partial X_{\mspace{-2mu}j}}(0,\ldots,0)$} \textsl{is} a coefficient in $f_i(0,\ldots,0,X_{m_{i-1}+1},\ldots,X_n)$.

  Next we show that $f_1,\ldots,f_t$ is a regular sequence. With $i=1$ condition \eqref{star} says that $f_1(X_1,\ldots,X_n)$ has a coefficient which is a unit in $R$, and so $f_1$ is a non zero-divisor on $S$ by \ref{nzd}. Next we show that $f_{i+1}$ is a non zero-divisor on $S/(f_1,\ldots,f_{i})$ where $i \geqslant 1$. Write
\begin{equation}
  \label{eq:f}
  f_{i+1} \,= \ \textstyle \sum_{v_{m_i+1},\ldots,v_{n}} \mspace{1mu} h_{v_{m_i+1},\ldots,v_n} \mspace{3mu} X_{m_i+1}^{v_{m_i+1}}\cdots X_n^{v_{n_{}}} \,\in\, S \cong
  R[\mspace{-2.5mu}[X_1,\ldots,X_{m_{i}}]\mspace{-2.5mu}] [\mspace{-2.5mu}[X_{m_{i}+1},\ldots,X_n]\mspace{-2.5mu}]
\end{equation}
with \mbox{$h_* \in R[\mspace{-2.5mu}[X_1,\ldots,X_{m_{i}}]\mspace{-2.5mu}]$}. 
As $f_1,\ldots,f_{i} \in R[\mspace{-2.5mu}[X_1,\ldots,X_{m_{i}}]\mspace{-2.5mu}]$ by (a) there is an isomorphism:
\begin{equation}
  \label{eq:iso}
  S/(f_1,\ldots,f_{i}) \,\cong\, \big(R[\mspace{-2.5mu}[X_1,\ldots,X_{m_{i}}]\mspace{-2.5mu}]/(f_1,\ldots,f_{i})\big) [\mspace{-2.5mu}[X_{m_{i}+1},\ldots,X_n]\mspace{-2.5mu}]\;.
\end{equation}  
In particular, the image $\bar{f}_{i+1}$ of $f_{i+1}$ in $S/(f_1,\ldots,f_{i})$ can be identified with the element
\begin{displaymath}
  \bar{f}_{i+1} \,= \ \textstyle \sum_{v_{m_i+1},\ldots,v_{n}} \mspace{1mu} \tilde{h}_{v_{m_i+1},\ldots,v_n} \mspace{3mu} X_{m_i+1}^{v_{m_i+1}}\cdots X_n^{v_{n_{}}}
\end{displaymath}
in the right-hand side of \eqref{iso}, where \smash{$\tilde{h}_*$} is the image of $h_*$ in $R[\mspace{-2.5mu}[X_1,\ldots,X_{m_{i}}]\mspace{-2.5mu}]/(f_1,\ldots,f_{i})$. Hence, to show that $\bar{f}_{i+1}$ is a non zero-divisor, it suffices by \ref{nzd} 
to argue that one of the~coefficients $\tilde{h}_*$ is a unit. By \eqref{star} we know that \mbox{$f_{i+1}(0,\ldots,0,X_{m_{i}+1},\ldots,X_n)$} has a coefficient which is a unit in $R$, and by \eqref{f} this means that one of the elements \smash{$h_{v_{m_i+1},\ldots,v_n}(0,\ldots,0) \in R$} is a unit. Consequently $h_{v_{m_i+1},\ldots,v_n} = h_{v_{m_i+1},\ldots,v_n}(X_1,\ldots,X_{m_i})$ will be a unit in $R[\mspace{-2.5mu}[X_1,\ldots,X_{m_{i}}]\mspace{-2.5mu}]$, so its image \smash{$\tilde{h}_{v_{m_i+1},\ldots,v_n}$}
  is also a unit, as desired.

Next we show that $f_i \notin \mathfrak{n}^2 + (f_1,\ldots,f_{i-1})$ holds for all $i$. Suppose for contradiction that:
\begin{displaymath}
\textstyle  f_i \,=\, \sum_{v} p_v q_v \,+\, \sum_{w=1}^{i-1}g_w f_w\,, \text{ where } p_v, q_v \in \mathfrak{n} \text{ and } g_w \in S.
\vspace*{0.75ex}
\end{displaymath}
By assumption (b) we have that \smash{$\frac{\partial f_i}{\partial X_{\mspace{-2mu}j}}(0,\ldots,0) \in R$} is a unit for some \mbox{$m_{i-1}<j$}. It follows from the identity above that:
\begin{displaymath}
\textstyle
\frac{\partial f_i}{\partial X_{\mspace{-2mu}j}}(\underline{0}) \,=\, 
\sum_{v} \Big( \frac{\partial p_v}{\partial X_{\mspace{-2mu}j}}(\underline{0})\, q_v(\underline{0}) + p_v(\underline{0})\, \frac{\partial q_v}{\partial X_{\mspace{-2mu}j}}(\underline{0})\Big) \,+\, 
\sum_{w=1}^{i-1} \Big( \frac{\partial g_w}{\partial X_{\mspace{-2mu}j}}(\underline{0})\, f_w(\underline{0}) + g_w(\underline{0})\, \frac{\partial f_w}{\partial X_{\mspace{-2mu}j}}(\underline{0})\Big)\;.
\end{displaymath}
As already mentioned, the left-hand side is a unit, and this contradicts that the right-hand side belongs to $\mathfrak{m}$. Indeed, we have $p_v(\underline{0}), q_v(\underline{0}), f_w(\underline{0}) \in \mathfrak{m}$ as $p_v, q_v, f_w \in \mathfrak{n}$. Furthermore, $f_1,\ldots,f_{i-1}$ only depend on the variables $X_1,\ldots,X_{m_{i-1}}$ by (a), so every \smash{$\frac{\partial f_w}{\partial X_{\mspace{-2mu}j}}$} is zero.
\end{proof}

\begin{exa}
  \label{exa:HSV}
  In \mbox{$R[\mspace{-2.5mu}[U,V,W]\mspace{-2.5mu}]$} the following (more or less arbitrarily chosen) sequence, corresponding to $t=2$ and $m_1=2$, satisfies the assumptions of \lemref{sequence}:
\begin{displaymath}
  f_1 = a + U^3 \mspace{-1mu}+\mspace{-1mu} UV \mspace{-1mu}+\mspace{-1mu} V \quad \textnormal{and} \quad
  f_2 = b + UV^2 \mspace{-1mu}+\mspace{-1mu} W \mspace{-1mu}+\mspace{-1mu} W^2 \qquad (a,b \in \mathfrak{m})\;.
\end{displaymath}
Indeed, (a) is clear and (b) holds since \smash{$\frac{\partial f_1}{\partial V}(0,0,0) = 1=\frac{\partial f_2}{\partial W}(0,0,0)$}. So \prpref{regular} implies that if $R$ is Tor-persistent, then so is \mbox{$A=R[\mspace{-2.5mu}[U,V,W]\mspace{-2.5mu}]/(f_1,f_2)$}. 

Note that the fiber product ring
\begin{displaymath}
  R \,=\, k[\mspace{-2.5mu}[X]\mspace{-2.5mu}]/(X^4) \,\times_k\, k[\mspace{-2.5mu}[Y]\mspace{-2.5mu}]/(Y^3) \,\cong\, k[\mspace{-2.5mu}[X,Y]\mspace{-2.5mu}]/(X^4,\,Y^3,\,XY)
\end{displaymath}
is artinian, not Gorenstein, and by \cite[Thm.~1.1]{MR3691985} it is Tor-persistent. Hence the following ring (where we have chosen $a=Y^2$ and $b=X^2$) is Tor-persistent as well:
\begin{displaymath}
\pushQED{\qed} 
  A = k[\mspace{-2.5mu}[X,Y,U,V,W]\mspace{-2.5mu}]/(X^4,\, Y^3,\, XY,\, Y^2 \mspace{-1mu}+\mspace{-1mu} U^3 \mspace{-1mu}+\mspace{-1mu} UV \mspace{-1mu}+\mspace{-1mu} V,\ X^2 \mspace{-1mu}+\mspace{-1mu} UV^2 \mspace{-1mu}+\mspace{-1mu} W+W^2)\;. 
\qedhere
\popQED
\end{displaymath}
\end{exa}

\begin{proof}[Proof of \thmref{mainresult}]
  The equivalence ($i$)\,$\Leftrightarrow$\,($iii$) is noted in \rmkref{pows}. Let $a_1,\ldots,a_n$ be a set of elements that generate $\mathfrak{m}$. We have \smash{$\widehat{R} \cong R[\mspace{-2.5mu}[X_1,\ldots,X_n]\mspace{-2.5mu}]/(X_1-a_1,\ldots,X_n-a_n)$} by \cite[Thm.~8.12]{Mat}. The sequence $f_i = X_i-a_i$ clearly satisfies the assumptions in \lemref{sequence}, so the equivalence ($i$)\,$\Leftrightarrow$\,($ii$) follows. Note that $R[X_1,\ldots,X_n]_{(\mathfrak{m},X_1,\ldots,X_n)}$ and \mbox{$R[\mspace{-2.5mu}[X_1,\ldots,X_n]\mspace{-2.5mu}]$} have isomorphic completions (both are isomorphic to \mbox{$\smash{\widehat{R}}[\mspace{-2.5mu}[X_1,\ldots,X_n]\mspace{-2.5mu}]$}), so the equivalence ($iii$)\,$\Leftrightarrow$\,($iv$) follows from the already established equivalence between ($i$) and ($ii$).
\end{proof}

\section{Connections with the Gorenstein dimension}
\label{sec:G}

In this section, we give a few remarks and observations pertaining Aulander's G-dimen\-sion \cite{MAs67} and self Tor vanishing. For a commutative noetherian local ring $(R,\mathfrak{m},k)$, we~consider the following property (which $R$ may, or may not, have):
\begin{prt}
  \item[\con{TG}] Every finitely generated $R$-module $M$ satisfying $\Tor{R}{i}{M}{M}=0$ for all $i\gg 0$ has finite G-dimension, that is, $\Gdim{R}{M} < \infty$.
\end{prt}

  Every Tor-persistent ring has the property \con{TG}, see \cite[Prop.~(1.2.10)]{LNM}, and the converse holds if the maximal ideal $\mathfrak{m}$ is decomposable; see \cite[Thm.~5.5]{NassehTakahashi2016}.

Testing finiteness of the G-dimension via the vanishing of Tor, in some form, is an idea pursued in a number of papers. For example, in \cite[Thm.~3.11]{MR3695854} it was proved that a finitely generated module $M$ over a commutative noetherian ring $R$ has finite G-dimension if and only if the stable homology \smash{$\Stor{i}{M}{R}$} vanishes for every $i\in \mathbb{Z}$. Furthermore, finitely generated modules testing finiteness of the G-dimension via the vanishing of absolute homology, i.e.~Tor, were also examined in \cite{MR3536062}.

For the property \con{TG} we have the following stronger version of   \prpref{regular}.

\begin{prp}
  \label{prp:regular-G}
  Let $(R,\mathfrak{m},k)$ be a commutative noetherian local ring and let $\underline{x}=x_1,\ldots,x_n$ be an $R$-regular sequence. Then $R$ has the property \con{TG} if and only if $R/\ideal{\underline{x}}$ has it.
\end{prp}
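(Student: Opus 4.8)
The plan is to transcribe the proofs of \prpref{regular} and \lemref{fd}, replacing ``finite projective dimension'' everywhere by ``finite G-dimension'' and using, in place of \cite[Prop.~3.3.5(1)]{ifr} and \cite[(1.5.3)]{MR1455856}, the corresponding change-of-rings statements for the Gorenstein dimension, for which we refer to \cite{LNM} and \cite{MR1455856}. As in \prpref{regular}, by passing successively to the rings $R/\ideal{x_1,\ldots,x_{i-1}}$, on which $\bar{x}_i$ is again a non zero-divisor, it suffices to treat the case $n=1$; write $x=x_1\in\mathfrak{m}$. No hypothesis of the form $x_i\notin\mathfrak{m}^2+(x_1,\ldots,x_{i-1})$ will be needed, so this reduction is immediate.

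For the implication ``$R$ has \con{TG} $\Rightarrow$ $R/\ideal{x}$ has \con{TG}'', let $N$ be a finitely generated $R/\ideal{x}$-module with $\Tor{R/\idealt{x}}{i}{N}{N}=0$ for $i\gg 0$. The long exact sequence of \cite[11.65]{rot} used in the proof of \prpref{regular} gives $\Tor{R}{i}{N}{N}=0$ for $i\gg 0$, whence $\Gdim{R}{N}<\infty$ because $R$ has \con{TG}. Since $x$ is an $R$-regular element annihilating $N$, the change-of-rings theorem for Gorenstein dimension --- which for such $x$ and $N$ asserts $\Gdim{R}{N}<\infty\Leftrightarrow\Gdim{R/\idealt{x}}{N}<\infty$, with $\Gdim{R}{N}=\Gdim{R/\idealt{x}}{N}+1$ when these numbers are finite --- then yields $\Gdim{R/\idealt{x}}{N}<\infty$. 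The essential difference with \prpref{regular} is that, in contrast with the case of projective dimension, the implication $\Gdim{R}{N}<\infty\Rightarrow\Gdim{R/\idealt{x}}{N}<\infty$ holds for \emph{any} $R$-regular $x\in\mathfrak{m}$, with no need for $x\notin\mathfrak{m}^2$.

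For the converse, ``$R/\ideal{x}$ has \con{TG} $\Rightarrow$ $R$ has \con{TG}'', we follow the proof of \lemref{fd} with $S=R/\ideal{x}$, noting that $\pd{R}{S}=1$. Let $M$ be a finitely generated $R$-module with $\Tor{R}{i}{M}{M}=0$ for $i\gg 0$. Replacing $M$ by a sufficiently high syzygy we may assume, by dimension shifting, that $\Tor{R}{i}{M}{M}=0$ and $\Tor{R}{i}{M}{S}=0$ for all $i>0$ (here $\Tor{R}{i}{M}{S}=0$ automatically for $i\geqslant 2$ since $\pd{R}{S}=1$, while $\Tor{R}{1}{M}{S}=0$ because $M$ is a syzygy). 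In particular $\Tor{R}{1}{M}{S}=0$ says that $x$ is a non zero-divisor on $M$, and there is an isomorphism $\Ltp{M}{S}\cong M/xM$ in the derived category over $S$. Exactly as in \lemref{fd} this yields
\begin{displaymath}
  \Ltp[S]{(M/xM)}{(M/xM)}\,\cong\,\Ltp{(\Ltp{M}{M})}{S}\,\cong\,\Ltp{(M\otimes_R M)}{S}\,,
\end{displaymath}
and the right-hand side is homologically bounded because $\pd{R}{S}=1$. Hence $\Tor{S}{i}{M/xM}{M/xM}=0$ for $i\gg 0$, so $\Gdim{S}{M/xM}<\infty$ since $S$ has \con{TG}. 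As $x$ is a non zero-divisor on both $R$ and $M$, the change-of-rings equality $\Gdim{R/\idealt{x}}{M/xM}=\Gdim{R}{M}$ (valid for such $x$ and $M$) now gives $\Gdim{R}{M}<\infty$; since finiteness of the Gorenstein dimension is insensitive to passing to syzygies, the original module $M$ has finite G-dimension as well.

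The point requiring care is the use of the two change-of-rings statements for Gorenstein dimension invoked above: descent of finite G-dimension along $R\to R/\ideal{x}$ for a module killed by the non zero-divisor $x$, and the equality $\Gdim{R/\idealt{x}}{M/xM}=\Gdim{R}{M}$ when $x$ is a non zero-divisor on both $R$ and $M$. Both are classical (see \cite{LNM} and \cite{MR1455856}) and, decisively, hold for \emph{every} non zero-divisor $x\in\mathfrak{m}$ --- this is precisely what makes \prpref{regular-G} a genuine strengthening of \prpref{regular}. Everything else is a routine transcription of material already proved in \secref{Main}; in the converse the only computations to recheck are the syzygy reduction and the base-change isomorphism of derived tensor products, which reproduce those in the proof of \lemref{fd}. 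Alternatively, the converse can be deduced from a Gorenstein-dimension analogue of \lemref{fd} --- if $S$ has \con{TG} and finite flat dimension over $R$, then $R$ has \con{TG} --- proved along the same lines using \cite{MR1455856}.
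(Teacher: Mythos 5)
Your proposal is correct and follows essentially the same route as the paper: both directions are obtained by transcribing the proofs of \lemref{fd} and \prpref{regular}, replacing projective dimension by G-dimension and invoking the classical change-of-rings results for G-dimension along $R\to R/\ideal{x}$ (which, unlike the projective-dimension statement, need no hypothesis $x\notin\mathfrak{m}^2$). The only cosmetic differences are that you reduce both implications to $n=1$ (the paper treats the full sequence at once in the ``if'' direction) and that you extract the $M$-regularity of $x$ from $\Tor{R}{1}{M}{S}=0$ rather than citing a lemma.
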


\begin{proof}
  For the ``if'' part we proceed as in the proof of \lemref{fd} with $S=R/\ideal{\underline{x}}$. Note that having replaced $M$ with a sufficiently high syzygy, the sequence $\underline{x}$ becomes regular on $M$ (this is standard but see also \cite[Lem.~5.1]{NassehTakahashi}). From the finiteness of $\Gdim{R/\idealt{\underline{x}}}{M/\ideal{\underline{x}}M}$ we infer the finiteness of $\Gdim{R}{M}$ from \cite[Cor.~(1.4.6)]{LNM}. For the ``only if'' part proceed as in the proof of \prpref{regular}. From the finiteness of $\Gdim{R}{N}$ one always gets finiteness of
$\Gdim{R/\idealt{x}}{N}$ (the assumption $x \notin \mathfrak{m}^2$ is not needed)~by~\cite[Thm.~p.~39]{LNM}.~\qedhere 
\end{proof}

Now the arguments in the proof of \thmref{mainresult} applies and give the following.

\enlargethispage{2ex}

\begin{thm}
  \label{thm:TG}
  Let $(R,\mathfrak{m},k)$ be a commutative noetherian local ring. The following conditions are equivalent:
  \begin{eqc}
  \item $R$ has the property \con{TG}.
  \item $\widehat{R}$ has the property \con{TG}.
  \item \mbox{$R[\mspace{-2.5mu}[X_1,\ldots,X_n]\mspace{-2.5mu}]$} has the property \con{TG}.
  \item $R[X_1,\ldots,X_n]_{(\mathfrak{m},X_1,\ldots,X_n)}$ has the property \con{TG}. \qed
  \end{eqc}
\end{thm}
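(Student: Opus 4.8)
The plan is to mimic precisely the structure of the proof of \thmref{mainresult}, replacing ``Tor-persistent'' by ``has the property \con{TG}'' everywhere and invoking \prpref{regular-G} in place of \prpref{regular}. First I would record that the equivalence \eqclbl{i}\,$\Leftrightarrow$\,\eqclbl{iii} follows from \prpref{regular-G}: the sequence $X_1,\ldots,X_n$ is regular on \mbox{$S=R[\mspace{-2.5mu}[X_1,\ldots,X_n]\mspace{-2.5mu}]$}, and $S/(X_1,\ldots,X_n)\cong R$, so $S$ has \con{TG} if and only if $R$ does. (Note the gain over \thmref{mainresult}: here no condition of the form $X_i\notin\mathfrak{n}^2+(X_1,\ldots,X_{i-1})$ is needed, since \prpref{regular-G} is a genuine biconditional for arbitrary regular sequences; the hypotheses of \lemref{sequence} are no longer required for this step.)

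Next I would establish \eqclbl{i}\,$\Leftrightarrow$\,\eqclbl{ii}. Choosing generators $a_1,\ldots,a_n$ of $\mathfrak{m}$, one has $\widehat{R}\cong S/(X_1-a_1,\ldots,X_n-a_n)$ by \cite[Thm.~8.12]{Mat}. I would then check that $X_1-a_1,\ldots,X_n-a_n$ is a regular sequence on $S$ — this is exactly what \lemref{sequence} gives (with $t=n$, $m_i=i$, $f_i=X_i-a_i$, using $\frac{\partial f_i}{\partial X_i}(0,\ldots,0)=1$), or alternatively it follows directly from \ref{nzd} applied successively, since after killing $X_1-a_1,\ldots,X_{i-1}-a_{i-1}$ the quotient is again a power series ring over $\widehat{\phantom{R}}$-type ring and $X_i-a_i$ has unit coefficient $1$. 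Applying \prpref{regular-G} to this regular sequence on $S$ yields: $S$ has \con{TG} $\Leftrightarrow$ $S/(X_1-a_1,\ldots,X_n-a_n)=\widehat{R}$ has \con{TG}. Combined with \eqclbl{i}\,$\Leftrightarrow$\,\eqclbl{iii} this gives \eqclbl{i}\,$\Leftrightarrow$\,\eqclbl{ii}.

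Finally, for \eqclbl{iii}\,$\Leftrightarrow$\,\eqclbl{iv}, I would observe, exactly as in the proof of \thmref{mainresult}, that $R[X_1,\ldots,X_n]_{(\mathfrak{m},X_1,\ldots,X_n)}$ and $R[\mspace{-2.5mu}[X_1,\ldots,X_n]\mspace{-2.5mu}]$ have isomorphic completions (both completing to $\widehat{R}[\mspace{-2.5mu}[X_1,\ldots,X_n]\mspace{-2.5mu}]$), and then apply the already-proved equivalence \eqclbl{i}\,$\Leftrightarrow$\,\eqclbl{ii} to the local ring $R[X_1,\ldots,X_n]_{(\mathfrak{m},X_1,\ldots,X_n)}$ (its completion is $\widehat{R}[\mspace{-2.5mu}[X_1,\ldots,X_n]\mspace{-2.5mu}]$, which by \eqclbl{i}\,$\Leftrightarrow$\,\eqclbl{iii} has \con{TG} iff $R[\mspace{-2.5mu}[X_1,\ldots,X_n]\mspace{-2.5mu}]$ does).

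I do not anticipate a serious obstacle: the theorem is stated with a ``\qed'' in the excerpt precisely because the authors consider it to follow formally from \prpref{regular-G} and the bookkeeping already carried out for \thmref{mainresult}. The only point requiring the tiniest care is making sure \prpref{regular-G} is applied on the correct ring (on $S$, not on $R$) for the $\widehat{R}$-equivalence, and that the sequence $X_i-a_i$ is genuinely $S$-regular — but this is supplied verbatim by \lemref{sequence}. So the ``hard part,'' such as it is, is merely to phrase the three implications cleanly; everything substantive has been done in \secref{Main} and in \prpref{regular-G}.
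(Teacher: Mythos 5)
Your proposal is correct and follows essentially the same route as the paper: the authors simply state that ``the arguments in the proof of Theorem~\ref{thm:mainresult} apply,'' which is exactly the substitution of \prpref{regular-G} for \prpref{regular} (and the same use of \lemref{sequence} to see that $X_1-a_1,\ldots,X_n-a_n$ is $S$-regular, plus the isomorphic-completions argument for \eqclbl{iii}\,$\Leftrightarrow$\,\eqclbl{iv}) that you carry out. Your side remark that the condition $x_i\notin\mathfrak{m}^2+(x_1,\ldots,x_{i-1})$ is no longer needed because \prpref{regular-G} is an unconditional biconditional is accurate and is precisely why the paper can dispense with a separate proof.
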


\section*{Acknowledgments}

We thank Avramov, Iyengar, Nasseh, and Sather-Wagstaff for useful comments and for making their manuscript \cite{AINAW2} available to us.

Part of this work was completed when Holm visited West Virginia University in March 2018. He is grateful for the kind hospitality of the WVU Department of Mathematics.

%\bibliographystyle{amsplain}
%\bibliography{+references}

\def\cprime{$'$} \def\soft#1{\leavevmode\setbox0=\hbox{h}\dimen7=\ht0\advance
  \dimen7 by-1ex\relax\if t#1\relax\rlap{\raise.6\dimen7
  \hbox{\kern.3ex\char'47}}#1\relax\else\if T#1\relax
  \rlap{\raise.5\dimen7\hbox{\kern1.3ex\char'47}}#1\relax \else\if
  d#1\relax\rlap{\raise.5\dimen7\hbox{\kern.9ex \char'47}}#1\relax\else\if
  D#1\relax\rlap{\raise.5\dimen7 \hbox{\kern1.4ex\char'47}}#1\relax\else\if
  l#1\relax \rlap{\raise.5\dimen7\hbox{\kern.4ex\char'47}}#1\relax \else\if
  L#1\relax\rlap{\raise.5\dimen7\hbox{\kern.7ex
  \char'47}}#1\relax\else\message{accent \string\soft \space #1 not
  defined!}#1\relax\fi\fi\fi\fi\fi\fi} \def\cprime{$'$}
  \providecommand{\arxiv}[2][AC]{\mbox{\href{http://arxiv.org/abs/#2}{\sf
  arXiv:#2 [math.#1]}}}
  \providecommand{\oldarxiv}[2][AC]{\mbox{\href{http://arxiv.org/abs/math/#2}{\sf
  arXiv:math/#2
  [math.#1]}}}\providecommand{\MR}[1]{\mbox{\href{http://www.ams.org/mathscinet-getitem?mr=#1}{#1}}}
  \renewcommand{\MR}[1]{\mbox{\href{http://www.ams.org/mathscinet-getitem?mr=#1}{#1}}}
\providecommand{\bysame}{\leavevmode\hbox to3em{\hrulefill}\thinspace}
\providecommand{\MR}{\relax\ifhmode\unskip\space\fi MR }
% \MRhref is called by the amsart/book/proc definition of \MR.
\providecommand{\MRhref}[2]{%
  \href{http://www.ams.org/mathscinet-getitem?mr=#1}{#2}
}
\providecommand{\href}[2]{#2}

\end{document}